\definecolor{darkblue}{rgb}{0,0,0.6}
\title{The asymptotic dimension of quotients by finite groups}
\author{Daniel Kasprowski}
\address{Max-Planck-Institut für Mathematik, Vivatsgasse 7, 53111 Bonn, Germany}
\email{kasprowski@mpim-bonn.mpg.de}
\thanks{This work was supported by the Max Planck Society.}
\date{\today}
\newcommand{\bbR}{\mathbbm{R}}
\newcommand{\bbC}{\mathbbm{C}}
\newcommand{\bbN}{\mathbbm{N}}
\newcommand{\bbK}{\mathbb{K}}
\newcommand{\mcC}{\mathcal{C}}
\DeclareMathOperator{\diam}{diam}
\DeclareMathOperator{\asdim}{asdim}
\newcommand{\mcU}{\mathcal{U}}
\newcommand{\Fin}{{\mathcal{F}in}}
\numberwithin{equation}{section}
\newtheorem{thm}[equation]{Theorem}
\newtheorem{theorem}[equation]{Theorem}
\newtheorem{prop}[equation]{Proposition}
\newtheorem{cor}[equation]{Corollary}
\newtheorem{lemma}[equation]{Lemma}
\theoremstyle{definition}
\newtheorem{question}[equation]{Question}
\newtheorem{defi}[equation]{Definition}
\newtheorem{rem}[equation]{Remark}
 \newtheoremstyle{TheoremNum}
        {}{}              %%% space between body and thm
        {\itshape}                      %%% Thm body font
        {}                              %%% Indent amount (empty = no indent)
        {\bfseries}                     %%% Thm head font
        {.}                             %%% Punctuation after thm head
        { }                             %%% Space after thm head
        {\thmname{#1}\thmnote{ \bfseries #3}}%%% Thm head spec
\theoremstyle{TheoremNum}
\begin{document}
\begin{abstract}
Let $X$ be a proper metric space and let $F$ be a finite group acting on $X$ by isometries. We show that the asymptotic dimension of $F\backslash X$ is the same as the asymptotic dimension of $X$.
\end{abstract}
\subjclass[2010]{20F69, 54F45, 55M10}
\keywords{Asymptotic dimension, quotients by finite groups}
\maketitle
\section{Introduction}
If a metric space $X$ has asymptotic dimension $n$ and $F$ is a finite group acting isometrically on $X$, then it is easy to show that $F\backslash X$ has asymptotic dimension at most $|F|(n+1)-1$. We will show that the asymptotic dimension of the quotient is equal to the asymptotic dimension of $X$.

\begin{theorem}
\label{thm:main}
Let $X$ be a proper metric space and let $F$ be a finite group acting isometrically on $X$. Then $F\backslash X$ has the same asymptotic dimension as $X$.
\end{theorem}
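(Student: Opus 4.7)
The strategy is to prove both inequalities $\asdim(F\backslash X)\leq\asdim(X)$ and $\asdim(X)\leq\asdim(F\backslash X)$. The first strengthens the easy bound $|F|(n+1)-1$ from the introduction (with $n=\asdim(X)$) and is the main technical content. Neither direction reduces to a coarse equivalence between $X$ and $F\backslash X$: the quotient map $\pi\colon X\to F\backslash X$ is $1$-Lipschitz, but any set-theoretic section fails to be coarse, since two points $x$ and $fx$ in the same $F$-orbit map to the same point of $F\backslash X$ while being arbitrarily far apart in $X$. Thus an indirect construction of covers is required.

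For the main inequality, fix $R>0$. The plan is to construct an $F$-invariant, uniformly bounded cover of $X$ with $R$-multiplicity at most $n+1$; such a cover descends along $\pi$ to a cover of $F\backslash X$ of the same type, since the multiplicity at $\pi x$ counts $F$-orbits of cover elements meeting $B(x,R)$ and is bounded by the multiplicity of the lifted cover at $x$. Starting from families $\mcU_0,\ldots,\mcU_n$ provided by $\asdim(X)\leq n$ at a scale $R'\gg R$, each $R'$-disjoint and $D$-bounded, I would equivariantize by passing to the $F$-orbit $F\cdot\mcU_i=\{fU:f\in F,\ U\in\mcU_i\}$ and then merging elements of $F\cdot\mcU_i$ along the equivalence relation generated by ``within distance $R$''. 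For $R'$ sufficiently large in terms of $R$, $D$, and $|F|$, the resulting family $\widetilde{\mcU}_i$ is $F$-invariant, $R$-separated, and of diameter at most $|F|(D+R)$.

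The main obstacle is controlling the multiplicity of $\widetilde{\mcU}_0\cup\cdots\cup\widetilde{\mcU}_n$. A priori, $F$-averaging can inflate multiplicity by $|F|$, giving only the easy bound. The key point is that merging collapses the $F$-translates of a given $U\in\mcU_i$ that pass through a common point $x$ into a single class (they are pairwise within distance $0<R$), so at scale $R$ each point lies in at most one merged class per color, \emph{provided} distinct $F$-orbits of elements of $\mcU_i$ remain $R$-separated after merging. Ensuring the latter, especially near points with non-trivial stabilizer where orbit geometry is subtle and different orbits can approach each other, is the technical heart of the argument and where the properness of $X$ enters.

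For the converse $\asdim(X)\leq\asdim(F\backslash X)$, one runs a parallel construction: lift a cover of $F\backslash X$ to $X$ via $\pi^{-1}$, decompose each preimage $\pi^{-1}(V)=F\cdot\widetilde V$ into sections of diameter at most $2\diam V$, and apply an analogous merging step to control the multiplicity on $X$.
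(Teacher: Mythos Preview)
Your outline for the inequality $\asdim X\leq\asdim(F\backslash X)$ is essentially the paper's \cref{lem:covers}: pull back a bounded cover of $F\backslash X$ with large Lebesgue number and dimension $\leq n$, then split each preimage $\pi^{-1}(V)$ into its bounded orbit pieces. That direction is fine.

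For the main inequality $\asdim(F\backslash X)\leq\asdim X$ your approach diverges from the paper and has a genuine gap, though not quite the one you flag. Once you merge $F\cdot\mcU_i$ along the relation ``within distance $R$'', distinct merged classes are $R$-separated \emph{by construction}, so the multiplicity bound (one class per colour through each point) is automatic; the proviso you worry about is not the issue. The real problem is the diameter bound. A pigeonhole on the group elements appearing along a chain shows that chains in $F\cdot\mcU_i$ have length at most $|F|$ \emph{provided} $R'>|F|(D+R)$; but here $D=D(R')$ is the diameter bound produced by the $R'$-disjoint decomposition of $X$, so the requirement is $R'>|F|\bigl(D(R')+R\bigr)$. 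Finite asymptotic dimension gives no control whatsoever on the growth of $D(R')$, and without an Assouad--Nagata--type linear bound there is no reason such an $R'$ exists. One can build proper spaces of asymptotic dimension $0$ (e.g.\ disjoint unions of arithmetic progressions $\{0,k,2k,\dots,g(k)k\}$ with $g(k)\to\infty$, placed far apart) for which $D(R')/R'\to\infty$, and on such spaces your merged classes are unbounded. Properness does not help here.

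The paper avoids this entirely by passing to the Higson corona. It first uses the crude bound (\cref{lem:finasdim}) to get $\asdim(F\backslash X)<\infty$, then invokes Dranishnikov's theorem (\cref{thm:compare}) that for proper spaces of finite asymptotic dimension one has $\asdim X=\dim\nu X$. A short $C^*$-algebra computation (\cref{prop:action}) gives $\nu(F\backslash X)\cong F\backslash\nu X$, and a classical result of Pears (\cref{prop:fintoone}) on finite-to-one open surjections between compact Hausdorff spaces yields $\dim(F\backslash\nu X)=\dim\nu X$. Chaining these equalities finishes the proof. Properness is used exactly once, to make sense of the Higson compactification; it plays no role of the kind you envisage near points with large stabiliser.
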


As a corollary we also obtain the following family version of the theorem.

\begin{cor}
\label{cor:main}
Let $\{X_i\}_{i\in I}$ be a set of proper metric spaces and let $\{F_i\}_{i\in I}$ be a set of finite groups such that $F_i$ acts isometrically on $X_i$. If $\{X_i\}_{i\in I}$ has asymptotic dimension $n$ uniformly and there exists $N\in \bbN$ with $|F_i|\leq N$ for all $i\in I$, then $\{F_i\backslash X_i\}$ has asymptotic dimension $n$ uniformly.
\end{cor}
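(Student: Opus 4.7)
The plan is to reduce the uniform family statement to the single-space Theorem~\ref{thm:main} by a pigeonhole-plus-disjoint-union argument.

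I would argue by contradiction. If the conclusion failed, unravelling the definition of uniform asymptotic dimension produces a scale $R > 0$ and, for each $k \in \bbN$, an index $i_k \in I$ such that $F_{i_k} \backslash X_{i_k}$ admits no cover by sets of diameter $\leq k$ with $R$-multiplicity $\leq n+1$. Since there are only finitely many isomorphism classes of finite groups of order $\leq N$, a further pigeonhole passage to a subsequence lets me assume that all $F_{i_k}$ are isomorphic to a fixed finite group $F$; choosing isomorphisms $F \cong F_{i_k}$, the single group $F$ now acts isometrically on every $X_{i_k}$.

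Next, I would form $X := \bigsqcup_{k} X_{i_k}$ equipped with an $F$-invariant proper metric that restricts to the given metric on each $X_{i_k}$ and places the components increasingly far apart, for instance by a basepoint-and-arc construction applied to $F$-orbits of chosen basepoints rather than to single points (since $F$ need not fix a point of any $X_{i_k}$). Then $X$ is proper, $F$ acts by isometries, and $\asdim X$ equals the uniform asymptotic dimension of $\{X_{i_k}\}_k$, so $\asdim X \leq n$. Theorem~\ref{thm:main} yields $\asdim(F \backslash X) \leq n$, and since $F \backslash X$ is isometric to $\bigsqcup_k F_{i_k} \backslash X_{i_k}$ with the analogous component-separating proper metric, this translates exactly into the uniform bound $\asdim \{F_{i_k} \backslash X_{i_k}\}_k \leq n$, contradicting the construction of the sequence.

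The main obstacle I foresee is the construction of the disjoint-union metric to be simultaneously $F$-invariant, proper, and compatible with the uniform asymptotic dimension of the family. The $F$-invariance issue arises because $F$ may have no fixed point in any $X_{i_k}$; it is resolved by replacing basepoints with finite $F$-orbits throughout the construction and attaching separating arcs symmetrically, after which properness follows from properness of each $X_{i_k}$ together with the separating lengths tending to infinity.
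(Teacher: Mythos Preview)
Your proposal is correct and follows essentially the same route as the paper: reduce to a countable family, exploit the finiteness of isomorphism classes of groups of order $\leq N$, build a single proper metric space as a disjoint union whose components are separated using finite $F$-orbits of basepoints (exactly the paper's choice $Y_n = Hx_n$) so that the metric is $F$-invariant, and then apply \cref{thm:main} together with the standard identification of the asymptotic dimension of the union with the uniform asymptotic dimension of the family. The only cosmetic differences are that the paper argues directly rather than by contradiction and handles all isomorphism types at once via the direct sum $H=\bigoplus_j H_j$ instead of passing to a subsequence with a single group.
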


This article was motivated by the following.

In \cite[Theorem A.1]{KasLin} the author proved that for a group $G$ with a finite dimensional classifying space for the family of finite subgroups $\underbar EG$ the $K$-theoretic assembly map
\[H^G_n(\underbar EG;\bbK_R)\to K_n(R[G])\]
is split injective for every ring $R$ if for any proper left-invariant metric on $G$ the family $\{F\backslash G\mid F\leq G~\text{finite}\}$ has finite asymptotic dimension uniformly (or more generally finite decomposition complexity). 

While it is easy to show that if $G$ has finite asymptotic dimension, then for every $n\in \bbN$ the family $\Fin(G)_n:=\{F\backslash G\mid F\leq G, |F|\leq n\}$ has finite asymptotic dimension uniformly, the following question is open.
\begin{question}
\label{ques:asdim}
Has $\Fin(G):=\{F\backslash G\mid F\leq G~\text{finite}\}$ finite asymptotic dimension uniformly if $G$ has finite asymptotic dimension?
\end{question}
By \cref{cor:main} we have $\asdim \Fin(G)_n=\asdim G$ and hence it is not possible to obtain a counterexample to \cref{ques:asdim} by finding a group $G$ for which the uniform asymptotic dimension of $\Fin(G)_n$ goes to infinity with increasing $n\in\bbN$. Note that it is important that the metric is left-invariant and the quotient is taken from the left. For every finite subgroup $F\leq G$ the quotient $G/F$ is quasi-isometric to $G$, but this is in general not true for $F\backslash G$.

From \cref{thm:main} we also obtain the following corollary about the existence of equivariant covers.
\begin{cor}
\label{cor:covers}
Let $X$ be a proper metric space with asymptotic dimension at most $n$ and let $F$ be a finite group acting isometrically on $X$. Then for every $R>0$ there exists an $F$-equivariant, bounded cover of $X$ with Lebesgue number at least $R$ and dimension at most $n$.
\end{cor}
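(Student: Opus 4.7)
The plan is to use Theorem~\ref{thm:main} to produce a cover of $F\backslash X$ with the right dimension and Lebesgue-number properties, then $F$-equivariantly lift it to $X$. By Theorem~\ref{thm:main}, $\asdim(F\backslash X)\le n$, so for any auxiliary parameter $R'>0$ (to be fixed at the end) one may choose a uniformly bounded cover $\mathcal{V}$ of $F\backslash X$ of dimension $\le n$ and Lebesgue number $\ge R'$; set $D:=\sup_{V\in\mathcal V}\diam V<\infty$. Since the quotient map $\pi\colon X\to F\backslash X$ is $1$-Lipschitz, the naive pullback $\{\pi^{-1}(V)\}_{V\in\mathcal V}$ is already an $F$-equivariant (in fact $F$-invariant) cover of $X$ of dimension $\le n$ and Lebesgue number $\ge R'$; the only property that fails in general is boundedness, so the rest of the argument is a controlled refinement.

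To repair boundedness, I would split each $\pi^{-1}(V)$ into $|F|$ bounded pieces permuted by $F$. For each $V\in\mathcal V$ choose a lift $\tilde y_V\in\pi^{-1}(V)$ and, for every $f\in F$, put
\[
\tilde V_f := \bigl\{x\in\pi^{-1}(V)\;\bigm|\;d(x,f\tilde y_V)\le d(x,g\tilde y_V)\text{ for all }g\in F\bigr\},
\]
breaking ties via a fixed total order on $F$ so the $\tilde V_f$ partition $\pi^{-1}(V)$. Because $\pi(x)\in V$ already forces $d(x,h\tilde y_V)\le D$ for some $h\in F$, minimality gives $\tilde V_f\subseteq \overline{B(f\tilde y_V,D)}$; and $g\tilde V_f=\tilde V_{gf}$, so the family $\widetilde{\mathcal U}:=\{\tilde V_f\mid V\in\mathcal V,\,f\in F\}$ is $F$-equivariant, uniformly bounded of diameter $\le 2D$, and has dimension $\le n$ (each point lies in one $\tilde V_f$ per $V$ by the tie-breaking, and in at most $n+1$ of the $V$).

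The remaining step, which is the delicate one, is to verify $B(x,R)\subseteq\tilde V_f$ where $\tilde V_f$ is the unique piece through $x$. A direct estimate shows that this holds whenever the Dirichlet gap $\min_{g\ne f}d(x,g\tilde y_V)-d(x,f\tilde y_V)$ exceeds $2R$, since the inequality defining $\tilde V_f$ transfers from $x$ to $y\in B(x,R)$ with an additive error of $2R$.

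The hard part will be keeping this Dirichlet gap large uniformly. Near the fixed-point set of $F$ the translates $g\tilde y_V$ cluster together and the gap can collapse, independently of how fine $\mathcal V$ is. I would handle such $V$ separately: whenever $V$ meets the image of a fixed point of $F$, the preimage $\pi^{-1}(V)$ lies in the closed ball of radius $D$ about that fixed point and hence itself has diameter $\le 2D$, so one simply takes $\pi^{-1}(V)$ as a single $F$-invariant element of $\widetilde{\mathcal U}$ in place of its Dirichlet splitting. Taking $R'$ of order $R+D$ then forces the Dirichlet gap to be at least $2R$ in all remaining cases, so the resulting cover has all four properties claimed.
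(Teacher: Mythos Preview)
Your overall strategy---apply \cref{thm:main} to get $\asdim(F\backslash X)\le n$, take a good cover $\mathcal V$ of the quotient, pull it back, and then repair boundedness by splitting each $\pi^{-1}(V)$ into pieces indexed by $F$---is exactly the route the paper takes (this is the content of \cref{lem:covers}). The difference, and the place where your argument breaks, is in \emph{how} you split.

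Your Voronoi/Dirichlet splitting does not yield the claimed Lebesgue number. The obstruction is not the fixed-point set of $F$ but the set where some nontrivial $g\in F$ has small displacement, and these are not the same thing. Concretely, take $X=\bigsqcup_{n\ge 1}S^1_{1/n}$ (circles of radius $1/n$, placed so that distinct circles are far apart), with $\bbZ/2$ acting by the antipodal map on each circle. The action is free, so no $V$ meets the image of a fixed point and your special case never applies. Yet for any $V$ whose preimage contains one of the small circles, the two translates $\tilde y_V$ and $-\tilde y_V$ are at distance $\pi/n$, and your two Dirichlet half-circles meet along a boundary where the gap is zero; no choice of $R'$ changes this. (There is also a circularity in ``$R'$ of order $R+D$'', since $D$ is determined only after $R'$ is fixed, and a minor issue that a tie-breaking total order on $F$ cannot be left-invariant, which spoils $g\tilde V_f=\tilde V_{gf}$; but these are secondary to the main gap.)

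The paper's \cref{lem:covers} fixes exactly this: instead of assigning $x$ to a single nearest translate $f\tilde y_V$, it groups together \emph{all} translates by the subgroup $F_{x,s}\le F$ generated by the elements with displacement $\le 4s$ at $x$ (where $s$ bounds the diameters of the $V$'s). The resulting pieces $U_{fx_U}$ are still uniformly bounded (by $4s(|F|+1)$), and distinct pieces are $2s$-disjoint, which is what gives the Lebesgue number. Replacing your Dirichlet cells by this subgroup-thickened version makes your outline go through.
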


The key ingredient of the proof of \cref{thm:main} is to use Dranishnikov's result \cite[Theorem 6.2]{dranishnikov} comparing the asymptotic dimension with the topological dimension of the Higson corona. We recall the definition of the Higson corona and the comparison result in \cref{sec:higson}. 
\section{Asymptotic dimension}
\label{sec:asdim}
Asymptotic dimension is a coarse invariant of metric spaces introduced by Gromov \cite{gromov}. We begin by giving the definition and collect a few well-known facts.

\begin{defi}
Let $r>0$. A metric space $X$ is the \emph{$r$-disjoint union} of subspaces $X_i,~i\in I$ if $X=\bigcup_{i\in I}X_i$ and for all $x\in X_i, y\in X_j$ with $i\neq j$ we have $d(x,y)>r$. In this case we write
\[X=\bigsqcup_{r\text{-disjoint}}\{X_{i}~|~i\in I\}.\]
\end{defi}

\begin{defi}
A metric $X$ has \emph{asymptotic dimension at most $n$} if for each $r>0$ there exist decompositions
\[X=\bigcup_{j=0}^nU_j,\quad U_i=\bigsqcup_{\lambda\in I_j}^{r-disj.}V_j^{\lambda}\]
such that $\sup\{\diam V_j^{\lambda}\mid j\in\{0,..,n\},\lambda\in I_j\}<\infty$. We denote the asymptotic dimension of $X$ by $\asdim X$.

A set $\{X_i\}_{i\in I}$ of metric spaces has asymptotic dimension at most $n$ if for each $r>0$ there exist decompositions
\[X_i=\bigcup_{j=0}^nU_{i,j},\quad U_{i,j}=\bigsqcup_{\lambda\in I_{i,j}}^{r-disj.}V_{i,j}^{\lambda}\]
such that $\sup\{\diam V_j^{\lambda}\mid i\in I,j\in\{0,..,n\},\lambda\in I_{i,j}\}<\infty$.
\end{defi}
\begin{rem}
Often a set $\{X_i\}_{i\in I}$ with the above property is said to have asymptotic dimension uniformly. We used this convention in the introduction but will omit the word uniformly from now on.
\end{rem}

If a finite group $F$ acts isometrically on a metric space $X$, then we will use the following metric on $F\backslash X$:
\[d(Fx,Fx')=\min_{f\in F}d(x,fx')\]

We will also need an equivalent formulation of finite asymptotic dimension. For this recall the following definitions.
\begin{defi}
A cover $\mcU$ of a metric space $X$
\begin{enumerate}
\item is \emph{bounded} if $\sup_{U\in\mcU}\diam U<\infty$.
\item has \emph{dimension at most $n$} if every $x\in X$ is contained in at most $n+1$ elements of $\mcU$.
\item has \emph{Lebesgue number} at least $R$ if for every $x\in X$ there exists $U\in\mcU$ with $B_R(x)\subseteq U$.
\end{enumerate}
\end{defi}
\begin{prop}[{\cite[Theorem 9.9]{roe}}]
\label{prop:alternative}
Let $X$ be a proper metric space. Then $\asdim X\leq n$ if for each $R>0$ there exists a bounded cover $\mcU$ of $X$ such that no more than $n+1$ members of $\mcU$ meet any ball of radius $R$. Equivalently, for each $R>0$ there exists a bounded cover $\mcU$ of $X$ of dimension at most $n$ and Lebesgue number at least $R$.
\end{prop}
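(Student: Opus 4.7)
The plan is to establish the equivalence of $\asdim X\leq n$ with each of the two cover conditions by passing between $r$-disjoint decompositions and bounded-multiplicity covers. The forward direction is a direct thickening construction, while the converse requires a combinatorial coloring argument on the nerve of the cover.

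For $\asdim X\leq n\Rightarrow$ the existence of such a cover, given $R>0$ I would apply the definition of asymptotic dimension at scale $r=3R$ to obtain $X=\bigcup_{j=0}^n U_j$ with $U_j=\bigsqcup_\lambda V_j^\lambda$ an $r$-disjoint union of sets of uniformly bounded diameter at most $D$. Setting $W_j^\lambda:=\{x\in X:d(x,V_j^\lambda)\leq R\}$, one checks that $\{W_j^\lambda\}_{j,\lambda}$ is a bounded cover of diameter at most $D+2R$, with Lebesgue number at least $R$ (any $x$ lies in some $V_j^\lambda$, and then $B_R(x)\subseteq W_j^\lambda$), and of dimension at most $n$: if $x$ were in $W_j^{\lambda_1}\cap W_j^{\lambda_2}$ for $\lambda_1\neq \lambda_2$, the triangle inequality would force $d(V_j^{\lambda_1},V_j^{\lambda_2})\leq 2R<r$, contradicting $r$-disjointness. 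So each point lies in at most one $W_j^\lambda$ per level $j$, hence in at most $n+1$ sets overall. This cover also satisfies the $R$-multiplicity condition at a comparable scale.

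For the converse, given for each $R>0$ a bounded cover $\mcU$ with no more than $n+1$ members meeting any $R$-ball, I would extract an $r$-disjoint decomposition at any prescribed scale $r$ by choosing $R$ sufficiently large compared to $r+\sup_U\diam U$ and $(n+1)$-coloring $\mcU$ so that same-colored members are pairwise $r$-disjoint; the color classes then furnish the decomposition $U_j=\bigsqcup_\lambda V_j^\lambda$. The coloring is built by an inductive greedy argument on the nerve of $\mcU$, which has dimension at most $n$ by the multiplicity hypothesis; the key observation is that two elements $U,U'$ with $d(U,U')\leq r$ jointly meet a common ball of radius comparable to $R$, so the local chromatic constraints at each $U\in\mcU$ involve only finitely many neighbors and can be resolved with $n+1$ colors. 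Properness of $X$ is used to keep these local neighborhoods finite.

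The main obstacle is the converse coloring step: while the thickening in the forward direction is essentially formal, the passage from a bounded-multiplicity cover back to an $r$-disjoint decomposition requires the nontrivial combinatorial input that $n+1$ colors always suffice, which ultimately rests on the dimension bound of the nerve. This is precisely the content of the classical argument in \cite[Theorem 9.9]{roe}, which the author cites; in a complete proof I would either invoke that theorem directly, or reproduce its greedy induction using the bounded dimension of the nerve.
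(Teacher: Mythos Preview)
The paper supplies no proof here; the proposition is simply quoted from \cite[Theorem 9.9]{roe}. Your forward direction (thickening an $r$-disjoint decomposition to produce a bounded cover with the stated multiplicity and Lebesgue number) is correct and standard. The converse sketch, however, has a genuine gap. You want to pick $R$ large relative to $r+\sup_{U\in\mcU}\diam U$ and then $(n+1)$-colour $\mcU$ so that like-coloured members are $r$-disjoint; but the diameter bound depends on $\mcU$, which in turn depends on $R$, and this circularity cannot be broken in general. Already for $X=\bbZ$ with $n=1$: in any bounded cover with $R$-ball multiplicity at most $2$, each $R$-ball is contained in the union of the at most two members meeting it, forcing some member to have diameter at least $R-1$; hence $R\geq r+D$ is impossible once $r>1$. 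Your fallback---that the nerve has dimension at most $n$ and therefore a greedy $(n+1)$-colouring of the cover exists---is also false as stated: the three two-element subsets of a three-point space give a cover of multiplicity $2$ whose intersection graph is $K_3$, which needs three colours, not two.

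Roe's argument is not a colouring of the cover. From a cover with Lebesgue number $L$ and multiplicity at most $n+1$ one forms the partition of unity $\varphi_U(x)=d(x,X\setminus U)/\sum_V d(x,X\setminus V)$, which is Lipschitz with constant of order $1/L$; this defines a controlled map from $X$ into the geometric realisation of the $n$-dimensional nerve with its $\ell^1$-metric. The desired $r$-disjoint decomposition of $X$ is then the pull-back of a fixed decomposition of the simplicial complex (e.g.\ by barycentric pieces), and the Lebesgue number is precisely what converts the uniform gap in the nerve into an arbitrarily large gap in $X$. This analytic step via the nerve \emph{map} is what your combinatorial shortcut is missing; if you do not want to cite Roe outright, that is the mechanism you need to supply.
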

\begin{lemma}
\label{lem:finasdim}
Let $X$ be a proper metric space with $\asdim X=n$ and let $F$ be a finite group acting by isometries on $X$, then the quotient $F\backslash X$ has asymptotic dimension at most $(|F|(n+1))-1$.
\end{lemma}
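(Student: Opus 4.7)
My plan is to use the cover characterization from \cref{prop:alternative} and simply push covers of $X$ down to $F\backslash X$ via the quotient map $\pi\colon X\to F\backslash X$.

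Fix $R>0$. Since $\asdim X=n$, by \cref{prop:alternative} there is a bounded cover $\mcU$ of $X$ of dimension at most $n$ and Lebesgue number at least $R$. I would consider the pushed-forward cover $\pi(\mcU):=\{\pi(U)\mid U\in\mcU\}$ of $F\backslash X$ and verify three properties. First, boundedness: the quotient metric satisfies $d(Fx,Fy)\leq d(x,y)$, so $\diam\pi(U)\leq\diam U$, and the uniform bound on $\diam U$ carries over.

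Second, the Lebesgue number: given $Fx\in F\backslash X$ choose a lift $x\in X$ and pick $U\in\mcU$ with $B_R(x)\subseteq U$. If $Fy\in B_R(Fx)$ then by definition of the quotient metric there is $f\in F$ with $d(x,fy)<R$, so $fy\in U$ and $Fy=F(fy)\in\pi(U)$. Hence $B_R(Fx)\subseteq\pi(U)$.

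Third, the multiplicity: a point $Fx\in F\backslash X$ lies in $\pi(U)$ precisely when some element of the orbit $Fx$ lies in $U$. Since the orbit $Fx\subseteq X$ has cardinality at most $|F|$, and each of these points lies in at most $n+1$ members of $\mcU$, the point $Fx$ lies in at most $|F|(n+1)$ members of $\pi(\mcU)$. Applying \cref{prop:alternative} in the reverse direction to $F\backslash X$ gives $\asdim(F\backslash X)\leq |F|(n+1)-1$.

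The argument is essentially routine; the only thing to be slightly careful about is checking that the Lebesgue number survives the quotient, which uses the definition of the quotient metric in an essential way. There is no real obstacle here — this lemma mainly serves to record the trivial upper bound that the main theorem will then improve.
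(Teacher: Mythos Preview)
Your argument is correct and is essentially the paper's own proof: both push a bounded cover of $X$ with dimension at most $n$ and Lebesgue number at least $R$ down to $F\backslash X$ via $\pi$ (the paper writes $\pi(U)$ as $F\backslash FU$), check boundedness via the $1$-Lipschitz quotient map, check the Lebesgue number using surjectivity of $B_R(x)\to B_R(Fx)$, and bound the multiplicity by counting over the orbit. Nothing further is needed.
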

\begin{proof}
In this proof we will use the alternative description of asymptotic dimension from \cref{prop:alternative}. Let $R>0$ be given and choose a bounded cover $\mcU$ of $X$ of dimension at most $n$ and Lebesgue number at least $R$. 

For every $U\in\mcU$ we have $\diam F\backslash FU<\diam U$ and thus $F\backslash \mcU:=\{F\backslash FU\mid U\in\mcU\}$ is a bounded cover of $F\backslash X$. The ball $B_R(x)$ of radius $R$ around $x\in X$ maps onto the ball $B_R(Fx)\subseteq F\backslash X$ and hence the Lebesgue number of $F\backslash\mcU$ is bigger or equal to the Lebesgue number of $\mcU$.

Futhermore, each of the preimages $fx\in X$ of $Fx\in F\backslash$ is contained in at most $n+1$ elements of $\mcU$. Therefore, $Fx$ is contained in at most $|F|(n+1)$ elements of $F\backslash \mcU$ and has dimension at most $|F|(n+1)-1$.
\end{proof}

For the proof of \cref{cor:main} we will need the following results relating the asymptotic dimension of a set of metric spaces to the asymptotic dimension of a single space.

\begin{lemma}[{\cite[Lemma 2.2]{boxspaces}}]
\label{lemma:countable}
Let $\{X_i\}_{i\in I}$ be a set of metric spaces. Then
\[\asdim \{X_i\}_{i\in I}=\sup\{\asdim\{X_j\}_{j\in J}\mid J\subseteq I~\textnormal{countable}\}.\]
\end{lemma}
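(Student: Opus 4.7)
The plan is to prove the two inequalities separately. The inequality $\sup\{\asdim\{X_j\}_{j\in J}\mid J\subseteq I~\textnormal{countable}\}\leq \asdim\{X_i\}_{i\in I}$ is essentially tautological: any family of decompositions witnessing $\asdim\{X_i\}_{i\in I}\leq n$ for a given $r$ restricts to a uniform decomposition of any subfamily with the same diameter bound, so $\asdim\{X_j\}_{j\in J}\leq \asdim\{X_i\}_{i\in I}$ for every $J\subseteq I$, countable or not.

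For the reverse inequality I would argue by contrapositive. Assume $\asdim\{X_i\}_{i\in I}>n$; the goal is to exhibit a countable subfamily already of asymptotic dimension greater than $n$. Unpacking the definition, there must exist some $r>0$ for which no uniform diameter bound succeeds: for every $k\in\bbN$ there is some $i_k\in I$ admitting no decomposition of the form $X_{i_k}=\bigcup_{j=0}^n U_{j}$ with each $U_j$ an $r$-disjoint union of sets of diameter at most $k$. The countable subfamily $\{X_{i_k}\}_{k\in\bbN}$ then has asymptotic dimension greater than $n$: at this same $r$, any hypothetical uniform diameter bound $D_0$ would, for any integer $k\geq D_0$, force $X_{i_k}$ to admit such a decomposition (of diameter at most $D_0\leq k$), contradicting the choice of $i_k$. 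This yields $\sup\geq n+1$, hence $\sup\geq\asdim\{X_i\}_{i\in I}$ whenever the right-hand side is finite; running the same construction at every $n\in\bbN$ covers the case $\asdim\{X_i\}_{i\in I}=\infty$, giving $\sup=\infty$ as well.

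I expect no real obstacle: the argument is a quantifier manipulation extracting a single bad countable sequence of indices from the failure of uniform boundedness. The only thing to watch is the correct negation of the ``for every $r$ there exists $D$'' clause in the definition of uniform asymptotic dimension and the order in which the choices $i_k$ are made. No compactness, properness, or group action enters this lemma, which is why it can be quoted from \cite{boxspaces} without modification and used freely in the proof of \cref{cor:main}.
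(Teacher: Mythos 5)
Your proof is correct. Note that the paper itself gives no argument for this lemma --- it is quoted from \cite{boxspaces} --- so there is nothing internal to compare against; your two-inequality, quantifier-extraction argument is the standard proof of this statement. The one point your argument tacitly uses, and which is worth making explicit, is that the decompositions for distinct indices $i$ are chosen independently, so that ``the family admits decompositions with finite uniform diameter bound'' is equivalent to ``there exists a single $D$ such that every $X_i$ admits an $(r,n)$-decomposition with diameters at most $D$''; this equivalence is exactly what justifies your negation and the choice of the sequence $i_k$, and with it the extracted countable subfamily $\{X_{i_k}\}_{k\in\bbN}$ indeed has asymptotic dimension greater than $n$ at the same $r$, as you claim.
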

\begin{defi}
For a set $\{(X_n,d_n)\}_{n\in\bbN}$ of metric spaces with finite subspaces $Y_n\subseteq X_n$ and a sequence $\{f(n)\}_{n\in\bbN}$ of positive numbers with $f(n)\geq \diam Y_n$ let $S(\{X_n\},\{f_n\})$ denote the disjoint union $S(\{X_n\},\{f_n\})=\bigsqcup_{i\in I}X_i$ with the following metric. For $x\in X_n, y\in X_m$ we have
\[d(x,y)=\left\{\begin{matrix}d_n(x,y)&n=m\\d_n(x,Y_n)+d_m(y,Y_m)+\max\{f(n),f(m)\}&\text{else}\end{matrix}\right.\]
This is indeed a metric since $f(n)\geq \diam Y_n$.
\end{defi}

\begin{prop}
\label{prop:space}
Let $\{(X_n,d_n)\}_{n\in\bbN}$ be as set of metric spaces with finite subspaces $Y_n\subseteq X_n$ and $\{f(n)\}_{n\in\bbN}$ a sequence of strictly increasing positive numbers with $f(n)\geq Y_n$. The metric space $S(\{X_n\},\{f(n)\})$ has the same asymptotic dimension as $\{X_n\}_{n\in\bbN}$ and it is proper if and only if each $X_n$ is proper.
\end{prop}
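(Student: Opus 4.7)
The plan is to prove the two asymptotic dimension inequalities separately, exploiting the fact that each $X_n$ isometrically embeds in $S:=S(\{X_n\},\{f(n)\})$ while distinct pieces $X_n,X_m$ sit at $S$-distance at least $\max\{f(n),f(m)\}$; properness will then be handled by a direct compactness argument. The easy direction $\asdim\{X_n\}_{n\in\bbN}\leq \asdim S$ is immediate from the isometric embedding: any uniform decomposition of $S$ with parameter $r$ restricts to a decomposition of each $X_n$ with the same parameter and the same uniform diameter bound.

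For the reverse inequality, set $k:=\asdim\{X_n\}_{n\in\bbN}$ and fix $r>0$. Since $\{f(n)\}$ is strictly increasing, there is an $N$ with $f(N)>r$, after which the pieces $X_n,~n\geq N$, are pairwise $r$-disjoint in $S$. I would apply the uniform $(k+1)$-fold decomposition of $\{X_n\}_{n\geq N}$ into $r$-disjoint families of sets of diameter at most some $D$, then take the union over $n\geq N$ family by family; because the $X_n$ are themselves $r$-disjoint for $n\geq N$, each of the resulting $k+1$ families is still an $r$-disjoint union of sets of diameter at most $D$, which witnesses $\asdim\bigcup_{n\geq N}X_n\leq k$. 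The leftover piece $\bigcup_{n<N}X_n$ is a finite union of metric spaces each of asymptotic dimension at most $k$, so by iterating the finite union theorem for asymptotic dimension (Bell--Dranishnikov) it also has asymptotic dimension at most $k$. A final application of the finite union theorem to the decomposition $S=\bigcup_{n<N}X_n\cup\bigcup_{n\geq N}X_n$ yields $\asdim S\leq k$.

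For properness, I would use that each $X_n$ is clopen in $S$, since the distance between distinct pieces is bounded below by $\max\{f(n),f(m)\}>0$, and that the subspace metric on $X_n$ agrees with $d_n$; thus properness of $S$ forces properness of every $X_n$. Conversely, given $x\in X_n$ and $R>0$, the condition $\max\{f(n),f(m)\}\leq R$ can hold for only finitely many $m$ (this is where one needs $f(n)\to\infty$, which is forced by the converse direction), so only finitely many pieces meet the closed ball $\bar B_R(x)$; each such intersection is contained in a bounded $d_m$-neighborhood of the finite set $Y_m$ and is therefore compact by properness of $X_m$, so $\bar B_R(x)$ is a finite union of compact sets. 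No single step is genuinely hard; the only technical point to get right is the uniform diameter bound in the tail decomposition, which is exactly what the hypothesis $\asdim\{X_n\}_{n\in\bbN}=k$ supplies.
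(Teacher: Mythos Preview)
Your approach matches the paper's, but the final step has a small logical slip. You establish, for a given $r>0$, an $r$-decomposition of the tail $T_N:=\bigcup_{n\geq N}X_n$ and write that this ``witnesses $\asdim T_N\leq k$''; you then invoke the finite union theorem as a black box on $S=H_N\cup T_N$ with $H_N:=\bigcup_{n<N}X_n$. The problem is that $N=N(r)$ depends on $r$: what you have produced is an $r$-decomposition of $T_{N(r)}$, not $r'$-decompositions of a \emph{fixed} tail $T_N$ for all $r'>0$, so you have not verified the hypothesis $\asdim T_N\leq k$ needed to apply the union theorem to the pair $(H_N,T_N)$. The fix is immediate and is precisely what the paper does: observe that $H_N$ and $T_N$ are themselves at distance greater than $r$ in $S$ (for $m<N\leq n$ one has $d(X_m,X_n)\geq f(n)\geq f(N)>r$), so the $r$-decomposition of $H_N$ (available since $\asdim H_N\leq k$) and your $r$-decomposition of $T_N$ can simply be merged colour by colour to give an $r$-decomposition of $S$. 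This yields $\asdim S\leq k$ directly, without a second appeal to the union theorem.

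One further remark: you correctly flag, in the properness argument, that one needs $f(n)\to\infty$ rather than merely ``strictly increasing''. The same unboundedness is already being used in the asymptotic-dimension argument when you choose $N$ with $f(N)>r$. The paper tacitly assumes the stronger $f(n)\geq n$ inside its proof, and in the only application one has $f(n)=\diam Y_n+n$, so this is harmless there.
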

\begin{proof}
Let $\asdim \{X_n\}_{n\in \bbN}=d$.

By \cite[Proposition 9.13]{roe} for every $N\in\bbN$ we have \[\asdim \bigcup_{n=1}^NX_n=\max\{\asdim X_n\mid n\leq N\}\leq \asdim \{X_n\}_{n\in\bbN},\]
where $\bigcup_{n=1}^NX_n$ is considered as a subset of $S(\{X_n\},\{f_n\})$. 
For $r>0$ choose decompositions
\[X_n=\bigcup_{i=0}^dX_{n,i},~X_{n,i}=\bigsqcup_{\lambda\in J_{n,i}}^{r-disj}Y_{n,i}^\lambda\]
with $\sup\{\diam Y_{n,i}^\lambda\mid n\in\bbN, i\in\{0,\ldots,n\}, \lambda\in J_{n,i}\}<\infty$. Also choose $N>R$ and a decomposition
\[\bigcup_{n=1}^NX_n=\bigcup_{i=0}^dU_i,~U_i=\bigsqcup_{\lambda\in J_i}^{r-disj}U_i^\lambda\]
with $\sup\{\diam U_i^\lambda\mid i\in\{0,\ldots,n\}, \lambda\in J_i\}<\infty$. Then we can consider the decomposition
\[S(\{X_n\},\{f(n)\})=\bigcup_{i=0}^d\left(U_i\cup\bigcup_{n=N+1}^\infty X_{n,i}\right)\]
with
\[U_i\cup\bigcup_{n=N+1}^\infty X_{n,i}=\bigsqcup_{\lambda\in J_i}^{r-disj}U_i^\lambda\sqcup^{r-disj}\bigsqcup^{r-disj}_{n>N, \lambda \in J_{n,i}}Y_{n,i}^\lambda,\]
where the $r$-disjointness follows from $f(n)\geq n$ and the definition of $S(\{X_n\},\{f(n)\})$.

This shows that $\asdim S(\{X_n\},\{f_n\})\leq \asdim \{X_n\}_{n\in\bbN}$. 
On the other hand any such decomposition for $S(\{X_n\},\{f(n)\})$ can be restricted to the $X_n$ to prove that $\asdim \{X_n\}_{n\in\bbN}\leq \asdim S(\{X_n\},\{f(n)\})$.

If $S(\{X_n\},\{f(n)\})$ is proper than all the subspaces $X_n$ are proper as well. For $N>r$ the ball $B_r(x)\subseteq S(\{X_n\},\{f(n)\})$ for $x\in X_n$ is contained in $(B_r(x)\cap X_n)\cup\bigcup_{i=1}^{N}B_r(Y_i)$. This is a union of finitely many balls and thus compact if $X_i$ is proper for all $i\leq N$.
\end{proof}
\begin{lemma}
	\label{lem:covers}
Let $X$ be a metric space and let $F$ be a finite group acting isometrically on $X$. If $\asdim F\backslash X=n$, then for every $R>0$ there exists an $F$-equivariant, bounded cover of $X$ with Lebesgue number at least $R$ and dimension at most $n$. In particular, $\asdim X\leq \asdim F\backslash X$.
\end{lemma}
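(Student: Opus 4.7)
The plan is to apply Proposition 2.6 in both directions: use the hypothesis to extract a good cover of $F \backslash X$, and build from it an $F$-equivariant cover of $X$ with the same parameters. Given $R > 0$, first choose a bounded cover $\mathcal{V}$ of $F \backslash X$ of dimension at most $n$ and Lebesgue number at least $R$, with uniform diameter bound $D$. The naive lift $\{\pi^{-1}(V) : V \in \mathcal{V}\}$ preserves dimension and Lebesgue number, since $x \in \pi^{-1}(V)$ iff $\pi(x) \in V$ and $\pi(B_R(x)) \subseteq B_R(\pi(x))$, but it can fail to be bounded because $F$-orbits inside $\pi^{-1}(V)$ may be arbitrarily spread out in $X$.

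To repair boundedness the idea is to split each preimage using the geometry of a chosen orbit. For each nonempty $V \in \mathcal{V}$, pick $x_V \in \pi^{-1}(V)$ and partition the finite orbit $F x_V$ into its $\epsilon$-connected components (``clusters''), for $\epsilon := 2D + 2R$. Since $F$ acts on $F x_V$ by isometries, it permutes the clusters. For each cluster $C$ of $F x_V$, set
\[
U_{V, C} := \{y \in \pi^{-1}(V) : d(y, C) \leq D\}.
\]
The collection $\mathcal{U} := \{U_{V, C}\}$ is $F$-equivariant because $f U_{V, C} = U_{V, fC}$, and bounded because $|C| \leq |F|$ and points in $C$ are joined by $\epsilon$-chains, giving $\diam U_{V, C} \leq 2D + (|F| - 1)\epsilon$. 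It still covers $X$: any $y \in \pi^{-1}(V)$ has $d(F y, F x_V) \leq D$, so some translate of $x_V$ lies at distance at most $D$ from $y$.

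The heart of the argument, and the reason for the precise value of $\epsilon$, is to simultaneously control dimension and Lebesgue number by short triangle-inequality arguments. For the dimension: if $x$ lies in both $U_{V, C}$ and $U_{V, C'}$, then witnesses in $C$ and $C'$ at distance at most $D$ from $x$ are within $2D \leq \epsilon$ of each other, so they lie in a single cluster, forcing $C = C'$; hence the multiplicity of $x$ in $\mathcal{U}$ equals that of $\pi(x)$ in $\mathcal{V}$, which is at most $n+1$. For the Lebesgue number: given $x \in X$, pick $V$ with $B_R(\pi(x)) \subseteq V$, so $B_R(x) \subseteq \pi^{-1}(V)$; for every $y \in B_R(x)$ choose $f_y \in F$ with $d(y, f_y x_V) \leq D$, and observe that for any two $y, y' \in B_R(x)$ we have $d(f_y x_V, f_{y'} x_V) \leq 2D + 2R = \epsilon$, so all $f_y x_V$ lie in a common cluster $C$, giving $B_R(x) \subseteq U_{V, C}$. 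The ``in particular'' statement then follows by reapplying Proposition 2.6 to $\mathcal{U}$ after forgetting $F$-equivariance.
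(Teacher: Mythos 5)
Your proof is correct and follows essentially the same route as the paper: pull back a bounded cover of $F\backslash X$ of dimension at most $n$ and Lebesgue number at least $R$, then repair the unboundedness of each preimage $\pi^{-1}(V)$ by splitting it into uniformly bounded pieces governed by a coarse clustering of the orbit $Fx_V$ of a chosen basepoint, with the chain-of-length-at-most-$|F|$ estimate giving the diameter bound. The only (cosmetic) difference is bookkeeping: the paper organizes the pieces via cosets of the subgroup of $F$ generated by elements of displacement at most $4s$ and shows the pieces are $2s$-disjoint, whereas you use $\epsilon$-connected components of the orbit with $\epsilon=2D+2R$ and verify dimension and Lebesgue number directly.
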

\begin{proof}
	By \cref{prop:alternative} there is a bounded cover $\mcU$ of $F\backslash X$ of dimension at most $n$ and Lebesgue number at least $R$. Let $p\colon X\to F\backslash X$ be the projection. The cover $\{p^{-1}(U)\mid U\in\mcU\}$ is an $F$-equivariant cover of dimension at most $n$ and with Lebesgue number at least $R$ but it might not be bounded.
	
	Let $s:=\sup\{\diam U\mid U\in\mcU\}\geq R$. Then for each $x\in p^{-1}(U)$ we have $p^{-1}(U)\subseteq \bigcup_{f\in F} B_s(fx)$. For $x\in X$ let $F_{x,s}$ be the subgroup of $F$ generated by $\{f\in F\mid d(x,fx)\leq 4s\}$. For $x\in U$ define $U_x:=p^{-1}(U)\cap \bigcup_{f\in F_{x,s}}B_s(fx)$.
	
	We will first show that $\diam U_x<4s(|F|+1)$. Given $y,y'\in U_x$ there are $f,f'\in F_{x,s}$ with $d(y,fx)\leq s$ and $d(y',f'x)\leq s$. Therefore, $\diam U_x\leq \diam F_{x,s}x+2s$. We have $\diam F_{x,s}x=\max_{f\in F_{x,s}}d(x,fx)$ and every $f\in F_{x,s}$ can be written as $f=f_1\ldots f_k$ with $d(x,f_kx)\leq 4s$ and $k\leq |F|$. Hence $d(x,fx)\leq \sum_{i=1}^kd(x,f_ix)\leq 4ks$.
	
	For $U\in\mcU$ choose $x_U\in U$. Then $p^{-1}(U)=\bigcup_{f\in F}U_{fx_U}$. If $f^{-1}f'\in F_{x_U,s}$ then by definition we have $U_{fx_U}=U_{f'x_U}$. Now suppose $y\in B_s(U_{fx_U})\cap B_s(U_{f'x_U})$, then there are $h\in F_{fx_U,s}$ and $h'\in F_{f'x_U,s}$ with $d(y,hfx_U)\leq 2s$ and $d(y,h'f'x_U)\leq 2s$. Thus $d(hfx_U,h'f'x_U)\leq 4s$ and $f^{-1}h^{-1}h'f'\in F_{x_U,s}$. If $h\in F_{fx_U,s}$ then $d(f^{-1}hfx_U,x_U)=d(hfx_U,fx_U)\leq 4s$ and $f^{-1}hf\in F_{x_U,s}$. It follows that
	\[f^{-1}f'=(f^{-1}hf)(f^{-1}h^{-1}h'f')((f')^{-1}h'^{-1}f')\in F_{x_U,s}\]
	and thus $p^{-1}(U)$ is the $2s$ disjoint union of the $U_{fx_U}$ where one $f\in F$ per coset $F/F_{x_U,s}$ is chosen. 
	
	The cover
	\[\{U_{fx_U}\mid U\in\mcU, f\in F\}\]
	now is bounded, of dimension at most $n$ and has Lebesgue number at least $R$.
\end{proof}
\section{The Higson corona}
\label{sec:higson}
The following definitions can for example be found in \cite{dranishnikov} and \cite{roe}.

Let $X$ be a proper metric space. Given a bounded and continuous function $f\colon X\to \bbC$ and $R>0$, we define $\text{Var}_Rf\colon X\to\bbR$ by \[\text{Var}_Rf(x):=\sup\{|f(x)-f(y)|\mid d(x,y)<R\}.\] A function $f$ is a \emph{Higson function} if $\text{Var}_Rf\in\mcC_0(X)$ for every $R>0$. Furthermore, $\mcC_h(X)$ is the $C^*$-algebra of all Higson functions. By the Gelfand-Naimark theorem $\mcC_h(X)\supseteq \mcC_0(X)$ is isomorphic to the $C^*$-algebra of continuous functions $\mcC_0(hX)=\mcC(hX)$ for some compactification $hX$ of $X$. The compact space $hX$ is unique up to homeomorphism and is called the \emph{Higson compactification} of $X$. This yields a functor from the category of proper metric spaces and coarse maps to the category of compact Hausdorff spaces. The \emph{Higson corona} is defined as $\nu X:=hX\setminus X$. If a group $F$ acts on $X$ we denote the $C^*$-algebra of $F$-invariant Higson functions by $\mcC_h(X)^F$.% It is a compact Hausdorff space.
%
%We follow the definition of Roe \cite[Section 2.3]{roe}. Let $X$ be a proper metric space. Given a bounded, continuous function $f\colon X\to \bbC$ we define $\df\colon X\times X\to\bbR^{>0}$ by $\df(x,y):=f(x)-f(y)$. For $R>0$ let $\Delta_R$ denote the $R$-neighborhood of the diagonal $\Delta\subseteq X\times X$. We call $f$ a \emph{Higson function} if $\df|_{\Delta_R}\in \mcC_0(\Delta_R)$ for every $R>0$. Furthermore, we define $\mcC_h(X)$ to be the $C^*$-algebra of all Higson functions. By the Gelfand-Naimark theorem $\mcC_h(X)\supseteq \mcC_0(X)$ is isomorphic to the $C^*$-algebra of continuous functions $\mcC_0(hX)=\mcC(hX)$ for some compactification $hX$ of $X$. The compact space $hX$ is unique up to homeomorphism and is called the \emph{Higson compactification} of $X$. The \emph{Higson corona} is defined as $\nu X:=hX\setminus X$.
%A \emph{compactified space} $(\overline X,X)$ is a locally compact Hausdorff-space $X$ together with a compactification $\overline X$. The above construction can be extended to a functor $h$ from the category of proper metric spaces to the category of compactified spaces.

\begin{prop}
\label{prop:action}
Let $F$ be a finite group acting isometrically on a proper metric space $X$. By functoriality this induces an action of $F$ on $hX$. We have the following isomorphisms
\[\mcC(F\backslash hX)\cong\mcC(hX)^F\cong\mcC_h(X)^F\cong \mcC_h(F\backslash X)\cong \mcC(h(F\backslash X)).\]
Since $F$ is finite, the quotient $F\backslash hX$ is Hausdorff and thus $F\backslash\nu X$ and $\nu(F\backslash X)$ are homeomorphic.
\end{prop}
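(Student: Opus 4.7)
The plan is to establish the chain of four $C^*$-algebra isomorphisms one by one, invoke Gelfand duality to upgrade them to a homeomorphism of compactifications, and then restrict to the coronas.

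The first isomorphism $\mcC(F\backslash hX)\cong\mcC(hX)^F$ is the standard pullback identification for a finite group acting by homeomorphisms on a compact Hausdorff space. The key prerequisite is that $F\backslash hX$ is Hausdorff, which holds because $F$ is finite: given two disjoint orbits in the Hausdorff space $hX$, each being a finite set, a routine intersection/union argument produces disjoint $F$-invariant open neighborhoods. Once this is known, pullback along $\pi\colon hX\to F\backslash hX$ embeds $\mcC(F\backslash hX)$ into $\mcC(hX)^F$, and descent of any $F$-invariant continuous function (well-defined by invariance, continuous because $\pi$ is a quotient map) gives surjectivity. The second isomorphism $\mcC(hX)^F\cong\mcC_h(X)^F$ is obtained by restricting the Gelfand--Naimark isomorphism $\mcC(hX)\cong\mcC_h(X)$ to $F$-invariants; this is legitimate because the $F$-action on $hX$ is by definition the one induced by functoriality, so the isomorphism is automatically $F$-equivariant.

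The substantive step is the third isomorphism $\mcC_h(X)^F\cong \mcC_h(F\backslash X)$, which uses the interplay between the quotient metric $d(Fx,Fy)=\min_{g\in F}d(x,gy)$ and the Higson variation. For the forward direction, an $F$-invariant continuous function $f$ on $X$ descends to $\bar f$ on $F\backslash X$; if $d(Fx,Fy)<R$, pick $g\in F$ with $d(x,gy)<R$, and then by $F$-invariance $|\bar f(Fx)-\bar f(Fy)|=|f(x)-f(gy)|\leq \text{Var}_Rf(x)$. Hence $(\text{Var}_R\bar f)\circ p\leq \text{Var}_Rf$, and since $p\colon X\to F\backslash X$ is proper (fibers are finite, $F$ finite) and $\text{Var}_Rf\in\mcC_0(X)$, we get $\text{Var}_R\bar f\in\mcC_0(F\backslash X)$. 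For the reverse direction, if $g\in \mcC_h(F\backslash X)$, then $g\circ p$ is $F$-invariant and continuous, and since $p$ is $1$-Lipschitz, $\text{Var}_R(g\circ p)(x)\leq (\text{Var}_R g)(p(x))$, which is in $\mcC_0(X)$ because $p$ is proper. The fourth isomorphism $\mcC_h(F\backslash X)\cong\mcC(h(F\backslash X))$ is Gelfand--Naimark applied to the proper metric space $F\backslash X$.

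Combining the four isomorphisms yields $\mcC(F\backslash hX)\cong\mcC(h(F\backslash X))$ as commutative $C^*$-algebras; since these isomorphisms are compatible with the dense inclusions of $F\backslash X$ into both sides, uniqueness of the Gelfand spectrum gives a homeomorphism $F\backslash hX\cong h(F\backslash X)$ of compactifications of $F\backslash X$. Restricting to the complement of $F\backslash X$ and using $F$-invariance of $X\subseteq hX$ gives $F\backslash \nu X=F\backslash(hX\setminus X)\cong h(F\backslash X)\setminus (F\backslash X)=\nu(F\backslash X)$, as required.

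The main obstacle is step three: a careless verification of the Higson condition for the descended function would fail, and the essential input is precisely the formula for the quotient metric, which turns a bound on $d(Fx,Fy)$ into the existence of a representative pair in $X$ at distance less than $R$.
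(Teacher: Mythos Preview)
Your argument is correct and follows the same route as the paper: verify the four isomorphisms in the chain and then invoke Gelfand duality to pass to spaces and coronas. Your treatment of the third isomorphism is in fact more thorough than the paper's own proof, which only checks explicitly that a Higson function on $F\backslash X$ pulls back to an $F$-invariant Higson function on $X$ (using properness of $p$) and leaves the descent direction---the one you flag as the main obstacle and handle via the quotient-metric formula---implicit.
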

\begin{proof}
All but the third isomorphism follow directly from the definition of $hX$ and the fact that a map $X\to\bbC$ is $F$-equivariant if and only if it factors through $F\backslash X\to \bbC$.

Since $F$ is finite, the pre-image of every compact subset $K\subseteq F\backslash X$ is again compact. Therefore, if $f\colon F\backslash X\to\bbC$ is a Higson function, then so is $X\to F\backslash X\xrightarrow{f}\bbC$. This implies the third isomorphism.
\end{proof}

The \emph{topological dimension} $\dim X$ of a topological space $X$ is the smallest $n\in\bbN$ such that every open cover of $X$ has an open refinement of dimension at most $n$.

For the proof of the main theorem we need the following comparison.
\begin{thm}[{\cite[Theorem 6.2]{dranishnikov}}]
\label{thm:compare}
If a proper metric space $X$ has finite asymptotic dimension, then $\dim(\nu X)=\asdim X$.
\end{thm}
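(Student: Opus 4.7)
The plan is to establish $\dim(\nu X)=\asdim X$ via two inequalities, each translating cover data between the coarse world of $X$ and the topological world of $\nu X$ through the Gelfand duality $\mcC_h(X)\cong \mcC(hX)$.

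For the bound $\dim(\nu X)\leq \asdim X$, suppose $\asdim X = n$ and fix an increasing sequence $r_k\to\infty$. I would choose, via \cref{prop:alternative}, uniformly bounded covers $\mcU_k$ of $X$ of dimension at most $n$ and Lebesgue number at least $r_k$. The idea is to build, for each $U\in \mcU_k$, a continuous function $\phi_U^k$ supported in a slight thickening of $U$ with values in $[0,1]$ and with $\text{Var}_R \phi_U^k\to 0$ at infinity (obtained by scaling the distance function $x\mapsto d(x,X\setminus U)$ appropriately so that the variation is controlled by $R/r_k$). Piecing these together zone by zone at infinity produces Higson functions whose closed supports $\overline{\phi_U^k > 0}$ in $hX$ intersect $\nu X$ in closed sets. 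From the multiplicity bound on $\mcU_k$ one gets that each point of $\nu X$ lies in at most $n+1$ of these closed sets. A standard argument then refines any open cover of $\nu X$ to one of multiplicity at most $n+1$, giving $\dim(\nu X)\leq n$.

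For the reverse bound $\asdim X\leq \dim(\nu X)$, suppose $\dim(\nu X)\leq n$. Given $R>0$, I would produce a bounded cover of $X$ of dimension at most $n$ and Lebesgue number at least $R$. The key step is to first cover $\nu X$ by finitely many open sets $W_1,\dots,W_m$ in $\nu X$ of multiplicity at most $n+1$; extend each $W_i$ to an open set $\tilde W_i\subset hX$, and then consider the trace $V_i=\tilde W_i\cap X$. Because $\nu X$ sits ``at infinity'', the family $\{V_i\}$ covers $X$ outside some compact set with Lebesgue number tending to infinity, and the multiplicity bound from $\nu X$ persists off a compact set. Choosing the cover of $\nu X$ fine enough with respect to $R$ and then combining with an arbitrary cover of the compact ``core'' (whose contribution to the dimension can be absorbed by standard tricks) produces the desired bounded cover of $X$, giving $\asdim X \leq n$ again via \cref{prop:alternative}.

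The main obstacle in both directions is converting between the purely topological multiplicity datum on $\nu X$ and the metric datum (uniform boundedness plus Lebesgue number) on $X$; the passage through $hX$ is seductive because compactness gives finite refinements for free, but the refinements must be chosen to have variation going to zero at infinity (so that they come from genuine Higson functions) and to preserve the multiplicity bound when restricted back. I expect the upper bound to be the more delicate half, since one must simultaneously control the variation of the partition of unity \emph{and} retain the combinatorics of $\mcU_k$ at infinity; the lower bound is essentially a compactness argument once the Gelfand dictionary from \cref{prop:action} is in hand.
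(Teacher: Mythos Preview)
The paper does not prove this statement at all: \cref{thm:compare} is simply quoted from \cite[Theorem 6.2]{dranishnikov} and used as a black box in the proof of \cref{thm:main}. There is therefore no proof in the paper to compare your proposal against.

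That said, your sketch is more of an outline than a proof, and the second half in particular has a genuine gap. In the direction $\asdim X\leq \dim(\nu X)$ you take an open cover $W_1,\dots,W_m$ of $\nu X$ of multiplicity at most $n+1$, extend to opens $\tilde W_i\subset hX$, and set $V_i=\tilde W_i\cap X$. You then assert that the $V_i$ give a \emph{bounded} cover of $X$ away from a compact set with growing Lebesgue number. But nothing in this construction forces the $V_i$ to be bounded subsets of $X$: an open neighbourhood of a point of $\nu X$ in $hX$ typically has unbounded trace in $X$. Without boundedness you cannot invoke \cref{prop:alternative}. Dranishnikov's actual argument for this direction is considerably more delicate (and is where the hypothesis $\asdim X<\infty$ is genuinely used); your sketch does not indicate how to bridge this gap.

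In the first direction your idea is closer to the standard one, but the passage from ``there exists a closed cover of $\nu X$ of multiplicity $\leq n+1$'' to ``every open cover of $\nu X$ has a refinement of multiplicity $\leq n+1$'' is not a ``standard argument'' as stated; one needs the covers $\mcU_k$ (or rather the induced Higson partitions of unity) to be arbitrarily fine on $\nu X$, and you have not explained why increasing Lebesgue number on $X$ translates into arbitrary fineness on $\nu X$.
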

\section{Proof of the main theorem}
\label{sec:mainproof}
The last ingredient we need is the following proposition.
\begin{prop}[{\cite[Proposition 9.2.16]{pears}}]
\label{prop:fintoone}
Let $X,Y$ be weakly paracompact, normal spaces. Let $f\colon X\to Y$ be a continuous, open surjection. If for every point $y\in Y$ the pre-image $f^{-1}(y)$ is finite, then
$\dim(X) = \dim(Y).$
\end{prop}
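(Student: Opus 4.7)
The plan is to establish the two inequalities $\dim(X) \leq \dim(Y)$ and $\dim(Y) \leq \dim(X)$ separately by transferring open covers along $f$, using the finite-fiber hypothesis together with normality to separate fibers by disjoint opens and weak paracompactness (metacompactness) to extract point-finite open refinements with controlled multiplicity.

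For $\dim(Y) \leq \dim(X)$, I would take an open cover $\calv$ of $Y$, pull it back to the open cover $\{f^{-1}(V) : V \in \calv\}$ of $X$, and use the hypothesis $\dim(X) \leq n$ to extract an open refinement $\calu$ of multiplicity at most $n+1$. The natural candidate cover of $Y$ is then $\{f(U) : U \in \calu\}$, which is open (since $f$ is open), covers $Y$ (since $f$ is surjective), and refines $\calv$. The defect is that the multiplicity of this pushdown at a point $y$ is only bounded by $|f^{-1}(y)|(n+1)$, which is finite but not uniform. The remedy is to pre-refine $\calu$ locally around each fiber: for each $y \in Y$, use normality of $X$ to separate $f^{-1}(y) = \{x_1, \ldots, x_{k_y}\}$ by pairwise disjoint opens $W_1, \ldots, W_{k_y}$ each lying inside a single member of $\calu$, and produce an open neighborhood $V_y$ of $y$ with $f^{-1}(V_y) \subseteq \bigcup_i W_i$; then the pushdown of $\calu$ restricted over $V_y$ has multiplicity at most $n+1$. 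Weak paracompactness of $Y$ then assembles these local data into a global point-finite open refinement of $\calv$ of order at most $n+1$.

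The reverse inequality $\dim(X) \leq \dim(Y)$ is proved dually: starting with an open cover $\calu$ of $X$, the separating-opens construction descends $\calu$ to an open cover of $Y$; I would refine in $Y$ to multiplicity at most $\dim(Y)+1$, pull back to $X$, and intersect with the separating opens around the fibers. The main obstacle in both directions is the pre-shrinking step, i.e.\ producing a neighborhood $V_y$ of $y$ with $f^{-1}(V_y) \subseteq \bigcup_i W_i$, which is a form of local closedness for $f$ at $y$. This is exactly where the interplay of finite fibers, normality, and openness of $f$ is indispensable: finiteness of $f^{-1}(y)$ makes the iterated normal-separation argument feasible, openness of $f$ makes the resulting $V_y$ open, and without finiteness of fibers the set $f(X \setminus \bigcup_i W_i)$ could accumulate at $y$ and no such $V_y$ would exist. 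Once the pre-shrinking is in place, the rest is a standard metacompact-refinement exercise.
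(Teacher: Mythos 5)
The central step of your proposal is false under the stated hypotheses, and it is exactly the step you identify as the heart of the matter: the ``pre-shrinking'' claim that for each $y\in Y$, after separating the finite fiber $f^{-1}(y)=\{x_1,\dots,x_{k}\}$ by disjoint open sets $W_1,\dots,W_{k}$, there is an open neighborhood $V_y$ of $y$ with $f^{-1}(V_y)\subseteq\bigcup_i W_i$. Openness of $f$ plus finiteness of the fibers does not give this: openness yields a lower-semicontinuity statement (fibers near $y$ have points near each $x_i$, since each $f(W_i)$ is a neighborhood of $y$), but nothing prevents nearby fibers from containing \emph{additional} points far away from $f^{-1}(y)$, and those points make $f\bigl(X\setminus\bigcup_i W_i\bigr)$ accumulate at $y$ even though every single fiber is finite. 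Concretely, let $X=\bbR\sqcup(0,\infty)$ (disjoint union of a line and an open ray, a metric space, hence normal and weakly paracompact), $Y=\bbR$, and let $f$ be the identity on the first summand and the inclusion on the second. Then $f$ is a continuous open surjection with fibers of cardinality at most $2$, but for $y=0$ the fiber is the single point $0$ of the first summand, and for any bounded open $W\ni 0$ in that summand no neighborhood $V_y$ of $0$ satisfies $f^{-1}(V_y)\subseteq W$: every neighborhood of $0$ contains points $t>0$ whose preimages meet the ray. (The conclusion $\dim X=\dim Y$ of course still holds here, so this refutes your lemma, not the proposition.) What your step really needs is closedness of $f$ near the fiber, which is not among the hypotheses. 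The same defect undermines the dual direction, which relies on the same shrinking. In addition, even where the shrinking is available, the final sentence ``weak paracompactness then assembles these local data into a refinement of order at most $n+1$'' is not a proof: a point-finite refinement of $\{V_y\}$ superimposes the local families attached to different, overlapping $V_y$'s, and bounding the resulting order is precisely the dimension-theoretic work that remains.

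For orientation: the shrinking you want \emph{does} hold on the open set where the fiber cardinality is locally constant, since if all fibers over $V_0\ni y$ have exactly $k$ points one may take $V_y=V_0\cap\bigcap_i f(W_i)$, and there $f$ is even a $k$-sheeted local homeomorphism. A correct argument along these lines (and the reason the cited result in Pears needs real machinery) stratifies $Y$ by the sets $\{y\mid |f^{-1}(y)|\ge k\}$, which are open by your separation argument, handles the locally constant strata by the local-homeomorphism observation, and then reassembles the dimension estimate using subspace and countable sum theorems for covering dimension of weakly paracompact normal spaces. Note also that the paper does not prove this proposition at all; it quotes it from Pears, so the statement is an external input rather than something your cover-pushing sketch can replace as it stands.
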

\begin{proof}[Proof of \cref{thm:main}]
By \cref{lem:covers} $\asdim X=\infty$ implies $\asdim F\backslash X=\infty$.

Hence let $X$ be a proper metric space with $\asdim X=n<\infty$ and let $F$ be a finite group acting isometrically on $X$. By \cref{lem:finasdim} the quotient $F\backslash X$ has again finite asymptotic dimension and thus by \cref{thm:compare} its asymptotic dimension is the same as the dimension of $\nu(F\backslash X)$. By \cref{prop:action} we have a homeomorphism
\[F\backslash\nu(X)\cong \nu(F\backslash X).\]
The map $\nu X\to F\backslash \nu X$ is surjective and open, since it is the projection under a group action. The space $\nu X$ is a compact Hausdorff space and hence also $F\backslash \nu(X)$ is compact and Hausdorff. In particular both spaces are paracompact and normal. Now using \cref{prop:fintoone} and \cref{thm:compare} together with the above we get
\[\asdim F\backslash X=\dim \nu (F\backslash X)=\dim F\backslash \nu X=\dim \nu X=\asdim X=n.\qedhere\]
\end{proof}
\begin{proof}[Proof of \cref{cor:main}]
By \cref{lemma:countable} it suffices to consider the case where $I$ is countable and thus we will use $\bbN$ as index set instead.
Since $|F_n|\leq N$ there exist finitely many finite groups $H_j,~j=1,\ldots,k$ such that each $F_n$ is isomorphic to some $H_j$. Now let $H:=\bigoplus_{j=1}^kH_j$ act on $X_n$ as follows. Choose one $H_j$ isomorphic to $F_n$ and let $H_j$ act on $X_n$ using this isomorphism. All other components act trivially.
Then $\{H\backslash X_n\}_{n\in \bbN}$ is isometric to $\{F_n\backslash X_n\}_{n\in \bbN}$. Choose any sequence of points $x_n\in X_n$ and let $Y_n:=Hx_n$ and $f(n):=\diam Y_n+n$. 

Then $H$ acts componentwise on $S(\{X_n\},\{f(n)\})$ and this action is isometric since $d(fx,Y_n)=d(x,Y_n)$ for all $x\in X_n, f\in H$. Furthermore, $H\backslash S(\{X_n\},\{f(n)\})$ is isometric to $S(\{H\backslash X_n\},\{f(n)\})$ where for the definition of the later we consider the subspace $\{Hx_n\}\subseteq H\backslash X_n$.
By \cref{thm:main} and \cref{prop:space} we have
\begin{align*}
\asdim\{X_n\}_{n\in \bbN}&=\asdim S(\{X_n\},\{f(n)\})=\asdim H\backslash S(\{X_n\},\{f(n)\})\\
&=\asdim S(\{H\backslash X_n\},\{f(n)\})=\asdim \{H\backslash X_n\}_{n\in \bbN}.\qedhere\end{align*}
\end{proof}
\cref{cor:covers} directly follows from \cref{thm:main} and \cref{lem:covers}.
\bibliographystyle{amsalpha}
\bibliography{fqFDC}
\end{document}